\documentclass{amsart}

\usepackage[T1]{fontenc}
\usepackage[utf8]{inputenc}
\usepackage{amssymb,mathtools}
\usepackage{microtype}
\usepackage[hidelinks]{hyperref}
\usepackage{tikz}

\theoremstyle{plain}
\newtheorem{theorem}{Theorem}[section]
\newtheorem{lemma}{Lemma}[section]
\newtheorem{proposition}{Proposition}[section]
\newtheorem{corollary}{Corollary}[section]
\newtheorem{conjecture}{Conjecture}[section]
\theoremstyle{remark}
\newtheorem{remark}{Remark}[section]

\title[A Proof of the Szeg\H{o} Conjecture]
{A Proof of the Szeg\H{o} Conjecture on Jacobi Extrema}

\author{K. Castillo}

\address{CMUC, Department of Mathematics, University of Coimbra,
3000-143 Coimbra, Portugal}

\email{math@keniercastillo.com}

\date{\today}

\subjclass[2020]{Primary 33C45; Secondary 34C10, 42C05.}

\keywords{Jacobi polynomials, relative extrema, Szeg\H{o}
conjecture, Pr\"ufer transformation, monotonicity}

\begin{document}

\begin{abstract}
For Jacobi polynomials with parameters greater than $-1/2$, and with
the relative extrema enumerated from the endpoint $x=1$, the
normalised modulus at the $k$th extremum of degree $n+1$ is proved to
be strictly smaller than that at the $k$th extremum of degree $n$,
for $1\leq k\leq n$. This proves the Szeg\H{o} conjecture,
recorded in the 1975 fourth edition of his classic
monograph Orthogonal Polynomials, and strengthens it by removing
the ordering assumption on the parameters. The proof transforms the
Jacobi equation to angular form and compares Pr\"ufer amplitudes at
equal phase. Combined with a reduction of de Oliveira Filho and a separate
quadratic-transformation argument for the boundary case, the result
also settles a question concerning the Lov\'asz theta number of
spherical distance graphs in every dimension at least four.
\end{abstract}

\maketitle

\section{Introduction}

The problem originates in a conjecture of John Todd concerning the
relative extrema of Legendre polynomials; both Cooper and Szeg\H{o}
acknowledged Todd as its source. Their contributions appeared in
1950. Cooper proved that, for each fixed extremum, the asserted
decrease holds once the degree is sufficiently large
\cite{Cooper1950}, whereas Szeg\H{o} established it for every
degree and every corresponding extremum \cite{Szego1950}.
Sz\'asz extended the theorem to ultraspherical polynomials in the same
year \cite{Szasz1950}. The Jacobi question considered below is thus a
natural continuation of Szeg\H{o}'s theorem. 

We use $P_n^{(\alpha,\beta)}$ for the Jacobi polynomial of degree $n$
in the standard normalisation
$$
P_n^{(\alpha,\beta)}(1)=\frac{(\alpha+1)_n}{n!},
$$
where $(a)_n$ is the rising factorial. In the fourth edition of his
monograph, Szeg\H{o} observed that the ultraspherical
inequalities and their quadratic-transform counterpart suggested the
same conclusion for
$$
\alpha>\beta>-\frac12,
$$
and recorded this interior case as probable but open
\cite[p.~190]{Szego}.

The attribution of the conjecture requires some care. In an editorial
comment accompanying the 1982 reprint of Szeg\H{o}'s note, Askey drew
together the known boundary cases and proposed that the Jacobi
inequalities should persist throughout
$\alpha\geq\beta\geq-1/2$ \cite[p.~221]{SzegoCollected}. He later
stated the interior assertion explicitly, developed
its surrounding theory and consequences, and reported that he had
tried to prove it for twenty years
\cite[Conjecture~1, p.~23]{Askey1990}. Later literature has generally referred
to the assertion as Askey's conjecture; see \cite[pp.~1318-1319]{WongZhang},
\cite[p.~122, (4.9.6); Conjecture~24.8.6,
p.~659]{Ismail}, and
\cite[p.~95, (3.14.4)]{AskeyBateman}. Since the assertion appears in Szeg\H{o}'s monograph and, to the best
of our knowledge, no earlier formulation of it has been recorded, we
shall refer to it as Szeg\H{o}'s conjecture, while fully acknowledging
Askey's important role in stating it explicitly and bringing it to
wider attention.

The boundary information explains both the parameter range and the
strictness. The case $\alpha=\beta>-1/2$ is Sz\'asz's theorem, while
$\alpha>\beta=-1/2$ follows from it by a quadratic transformation
\cite[p.~22]{Askey1990}. At their common endpoint, the normalised
Jacobi polynomial is the Chebyshev polynomial $T_n$ of the first kind:
$$
\frac{P_n^{(-1/2,-1/2)}(x)}{P_n^{(-1/2,-1/2)}(1)}=T_n(x),
\quad
T_n(\cos\theta)=\cos(n\theta),
$$
so every relative maximum of the normalised modulus equals $1$ and
the strict inequalities become equalities.

Wong and Zhang obtained a substantial partial result for the interior
problem. They corrected Cooper's Jacobi asymptotic expansion and
proved that, for fixed $\alpha>\beta>-1/2$, all the required
inequalities hold simultaneously once the degree is sufficiently
large \cite{WongZhang}. In a separate paper they proved the reversed
inequalities for the degenerate algebraic specialisation
$P_n^{(0,-1)}$ \cite{WongZhangSIAM}. The latter result lies outside
the positive Jacobi range and does not conflict with the theorem
below.

For $\alpha,\beta>-1/2$, let
$$
1=x_{0,n}>x_{1,n}>\cdots>x_{n,n}=-1,
$$
be the points at which $\lvert P_n^{(\alpha,\beta)}\rvert$ has a relative
maximum on $[-1,1]$, with the endpoint maxima understood
one-sidedly. The derivative identity for Jacobi polynomials shows that
the $n-1$ interior points are the zeros of
$P_{n-1}^{(\alpha+1,\beta+1)}$; their strict maximality is also
verified below. In this notation the historical conjecture reads as
follows.

\begin{conjecture}[Szeg\H{o}, 1975]
\label{conj:szego-askey}
If $\alpha>\beta>-1/2$, then, for every $n\geq1$ and
$1\leq k\leq n$,
$$
\frac{\bigl|P_{n+1}^{(\alpha,\beta)}(x_{k,n+1})\bigr|}
     {P_{n+1}^{(\alpha,\beta)}(1)}
<
\frac{\bigl|P_n^{(\alpha,\beta)}(x_{k,n})\bigr|}
     {P_n^{(\alpha,\beta)}(1)}.
$$
\end{conjecture}

The main result proves this assertion in a larger parameter range.

\begin{theorem}\label{thm:main}
Let $\alpha,\beta>-1/2$. For each $n\geq1$, enumerate the points at
which $\lvert P_n^{(\alpha,\beta)}\rvert$ has a relative maximum on
$[-1,1]$, with endpoint maxima understood one-sidedly, as
$$
1=x_{0,n}>x_{1,n}>\cdots>x_{n,n}=-1.
$$
Then, for every $n\geq1$ and $1\leq k\leq n$,
$$
\frac{\bigl|P_{n+1}^{(\alpha,\beta)}(x_{k,n+1})\bigr|}
     {P_{n+1}^{(\alpha,\beta)}(1)}
<
\frac{\bigl|P_n^{(\alpha,\beta)}(x_{k,n})\bigr|}
     {P_n^{(\alpha,\beta)}(1)}.
$$
\end{theorem}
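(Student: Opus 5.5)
We work in the angular variable $x=\cos\theta$ and use a Prüfer transformation adapted to the extrema. With $u_n(\theta)=P_n^{(\alpha,\beta)}(\cos\theta)/P_n^{(\alpha,\beta)}(1)$ and the Jacobi operator written in Sturm--Liouville form,
$$
\bigl(p(\theta)u'\bigr)'+\lambda_n\, p(\theta)u=0,\qquad p(\theta)=\bigl(\sin\tfrac\theta2\bigr)^{2\alpha+1}\bigl(\cos\tfrac\theta2\bigr)^{2\beta+1},\qquad \lambda_n=n(n+\alpha+\beta+1),
$$
we introduce the Sonine-type function
$$
F_n(\theta)=u_n(\theta)^2+\frac{u_n'(\theta)^2}{\lambda_n}.
$$
It satisfies $F_n'=-2p'(u_n')^2/(\lambda_n p)$. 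At the extrema $x_{k,n}$ we have $u_n'=0$, so $F_n=u_n^2$ there. The relative-extremum values are therefore the values of the amplitude $R_n=\sqrt{F_n}$ at the points where the Prüfer phase $\varphi_n$ is a multiple of $\pi$. Here we write $\sqrt{\lambda_n}\,u_n=R_n\cos\varphi_n$ and $pu_n'=-R_n\sqrt{\lambda_n}\,p\sin\varphi_n$, suitably normalised. The first step is to record the phase and amplitude equations. In them $\varphi_n'=\sqrt{\lambda_n}+(\text{correction})\sin\varphi_n\cos\varphi_n$ and $(\log R_n)'=-(p'/p)\sin^2\varphi_n$, up to the chosen scaling. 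We also verify $\varphi_n(0)=0$ and that the phase increases through exactly $k\pi$ at $\theta_{k,n}=\arccos x_{k,n}$. This last fact gives the strict maximality of the interior points and the one-sided endpoint maxima.

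The key move is to reparametrise by phase. Set $\phi=\varphi_n(\theta)$, invert to $\theta=\theta_n(\phi)$, and regard $\log R_n$ as a function $\rho_n(\phi)$ on $[0,n\pi]$. The inequality of Theorem~\ref{thm:main} then reads $\rho_{n+1}(k\pi)<\rho_n(k\pi)$ for $1\le k\le n$. Since $\rho_n(0)=\rho_{n+1}(0)=0$ by the normalisation $u(1)=1$, it suffices to show
$$
\frac{d\rho_{n+1}}{d\phi}(\phi)<\frac{d\rho_n}{d\phi}(\phi)\quad\text{for a.e. }\phi\in(0,n\pi),
$$
or an integrated version of this over each interval $[(j-1)\pi,j\pi]$. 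From the amplitude equation,
$$
\frac{d\rho}{d\phi}=-\frac{(p'/p)(\theta)\sin^2\phi}{\varphi'(\theta)}.
$$
Since $p'/p=(\alpha+\tfrac12)\cot\tfrac\theta2-(\beta+\tfrac12)\tan\tfrac\theta2$, both of its pieces have signs fixed by $\alpha,\beta>-1/2$. The comparison then splits into two facts. First, at equal phase the point $\theta_{n+1}(\phi)$ lies to the left of $\theta_n(\phi)$, a Sturm comparison for the phase equations, since $\lambda_{n+1}>\lambda_n$. Second, the weight $-(p'/p)/\varphi'$ moves in the right direction under this shift together with the increase of $\varphi'\approx\sqrt\lambda$.

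We expect the main obstacle to be the sign of $p'/p$. Its $\cot$ part makes $\rho$ decrease, while its $\tan$ part near $\theta=\pi$ makes $\rho$ increase. A pointwise comparison can therefore fail where $p'/p$ changes sign, namely at $\tan^2(\theta/2)=(\alpha+\frac12)/(\beta+\frac12)$. There the parameter ordering $\alpha>\beta$ would ordinarily be used, and the theorem claims to avoid it. To remove it, we would first try to pick the Prüfer scaling with an auxiliary weight. For example, we would use the Liouville normal form $v=\sqrt{p}\,u$, with potential $q(\theta)=\frac{(\alpha^2-\frac14)}{4\sin^2(\theta/2)}+\frac{(\beta^2-\frac14)}{4\cos^2(\theta/2)}$, and frequency $\omega_n=n+\frac{\alpha+\beta+1}{2}$, chosen so that the $n$-dependence sits in $\omega_n$. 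The equal-phase comparison then reduces to monotonicity in $\omega$ of the factor $q/(\omega^2-q)$ along matched phases, where $\alpha,\beta$ enter symmetrically. Failing a pointwise argument, we would pair the half-periods $[(j-1)\pi,j\pi]$ and use the symmetry of $\sin^2\phi$ about $(j-\frac12)\pi$ with monotonicity of $\theta_n(\phi)$ to control the integrated difference. Strictness comes from $\lambda_{n+1}\ne\lambda_n$ and from $p'/p$ not vanishing identically when $(\alpha,\beta)\ne(-\frac12,-\frac12)$.
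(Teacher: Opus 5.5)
\textbf{There is a genuine gap.} The set-up is the paper's. Your $p'/p$ is exactly $H(\theta)=A\cot\frac\theta2-B\tan\frac\theta2$ with $A=\alpha+\frac12$, $B=\beta+\frac12$, and $R_n=\sqrt{F_n}$ is the Pr\"ufer amplitude. The paper also reduces the theorem to comparing
\[
\frac{d\rho_j}{d\phi}=-\frac{q_j(\phi)\sin^2\phi}{1-q_j(\phi)\sin\phi\cos\phi},\qquad q_j(\phi):=\frac{H(\theta_j(\phi))}{\sqrt{\lambda_j}},
\]
together with $\theta_{n+1}(\phi)<\theta_n(\phi)$. This expression is decreasing in $q_j$, so everything reduces to $q_{n+1}(\phi)>q_n(\phi)$. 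That is precisely the step you only assert (``the weight moves in the right direction'').

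You also locate the difficulty in the wrong place. Where $H<0$ the comparison is immediate: $\theta_{n+1}<\theta_n$ and $H'<0$ give $H(\theta_{n+1})>H(\theta_n)$, and dividing a negative number by the larger $\sqrt{\lambda_{n+1}}$ only increases it. Across the sign change of $H$ the inequality holds by signs alone, and $\alpha>\beta$ is never needed. The real conflict is where $H>0$, near $\theta=0$: the leftward shift raises $H$, but the larger frequency lowers $H/\sqrt\lambda$. These effects cancel to leading order, since $q_j(\phi)=\frac{2A}{(2A+1)\phi}+O(\phi)$ independently of $j$. So no heuristic based on $\varphi'\approx\sqrt\lambda$ can settle the sign.

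What is missing is the paper's normalised-coefficient lemma, which has four ingredients:
\begin{itemize}
\item In the phase variable, $q_j$ satisfies the Riccati-type equation $dq_j/d\phi=-L(\theta_j(\phi))\,q_j^2/(1-q_j\sin\phi\cos\phi)$, where $L=-H'/H^2$.
\item $L$ is strictly increasing on the interval where $H>0$. In $v=\tan^2\frac\theta2$ one has $2L=(1+v)(A+Bv)/(A-Bv)^2$.
\item The next-order expansion gives $q_{n+1}-q_n=\frac{3\delta(2A+1)}{2A+3}\bigl(\lambda_n^{-1}-\lambda_{n+1}^{-1}\bigr)\phi+O(\phi^3)$ with $\delta=\frac{A+3B}{6}>0$, so the difference starts positive.
\item At a first zero of $q_{n+1}-q_n$, the two Riccati equations and $L(\theta_{n+1})<L(\theta_n)$ force a positive derivative, which is a contradiction.
\end{itemize}

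Neither of your fallbacks supplies this. For the Liouville form $v=\sqrt p\,u$:
\begin{itemize}
\item Its critical points are not those of $u$, so its Pr\"ufer phase at multiples of $\pi$ neither locates $x_{k,n}$ nor gives $|u|$ there.
\item $\sqrt p$ vanishes at $\theta=0$, which destroys the normalisation $u(0)=1$.
\item $\omega_n^2-q$, with your potential $q$, becomes negative near an endpoint when $\alpha>\frac12$ or $\beta>\frac12$.
\end{itemize}
The half-period pairing is not an argument as stated.

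Separately, inverting $\varphi_n$ requires $\varphi_n'>0$ on all of $(0,\pi)$. This is not automatic, because $H$ is unbounded at the endpoints. Upward crossing of the levels $k\pi$ identifies the extrema but does not make $\varphi_n$ invertible. The paper proves monotonicity from $H'<0$ and the positive endpoint limits of $\varphi_n'/\sqrt{\lambda_n}$.
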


Thus the ordering $\alpha>\beta$ in the Szeg\H{o} conjecture
is unnecessary. It is important that the additional range
$-1/2<\alpha<\beta$ is not obtained from the conjectured range by a
symmetry argument. Although the reflection identity
$P_n^{(\alpha,\beta)}(-x)=(-1)^nP_n^{(\beta,\alpha)}(x)$ interchanges
the parameters, it also reverses the indexing of the
extrema and exchanges the two endpoint normalisations. It therefore
does not transform the displayed inequality for a fixed index $k$
into the inequality asserted by Theorem~\ref{thm:main}.

The proof passes from position to phase. After the substitution
$x=\cos\theta$, the normalised polynomial satisfies a singular
oscillatory equation whose first-order coefficient decreases strictly
from $+\infty$ to $-\infty$. Pr\"ufer variables separate the solution
into an amplitude and a phase. The phase is proved to be strictly
increasing; corresponding extrema of different degrees can therefore
be compared at the common phase $k\pi$. A second crossing argument
orders the inverse phases, and a sharp comparison of the first-order
coefficient after division by the degree-dependent frequency then
orders the amplitudes. This last normalised comparison is the point at
which a direct comparison in the original variable does not retain a
definite sign.

The conclusion does not extend unchanged to the entire Jacobi range
$\alpha,\beta>-1$. For example, when
$(\alpha,\beta)=(0,-3/4)$, the inequality with $n=k=1$ is reversed;
the exact calculation is recorded in Remark~\ref{rem:outside-range}.
This example does not determine the maximal parameter region in which
the comparison holds.

\section{Proof of the main theorem}

Throughout the proof, $\alpha,\beta>-1/2$ are fixed.

\subsubsection*{The angular equation}

For each $n\geq1$, set
$$
p_n(x):=
\frac{P_n^{(\alpha,\beta)}(x)}
     {P_n^{(\alpha,\beta)}(1)},
\quad
Y_n(\theta):=p_n(\cos\theta),
\quad
0\leq\theta\leq\pi,
$$
and introduce
$$
A:=\alpha+\frac12,
\quad
B:=\beta+\frac12,
\quad
\omega_n:=\sqrt{n(n+A+B)}>0.
$$
Unless otherwise indicated, a prime denotes differentiation with
respect to the displayed argument.

Since $\alpha,\beta>-\frac12$, we have $A,B>0$. The Jacobi
differential equation
$$
(1-x^2)y''
+
\bigl[\beta-\alpha-(\alpha+\beta+2)x\bigr]y'
+
n(n+\alpha+\beta+1)y
=
0
$$
and the change of variables $x=\cos\theta$ give
$$
Y_n''(\theta)
+
H(\theta)Y_n'(\theta)
+
\omega_n^2Y_n(\theta)
=
0,
\quad
0<\theta<\pi,
$$
where
$$
H(\theta)
:=
\frac{\alpha-\beta+(\alpha+\beta+1)\cos\theta}{\sin\theta}
=
A\cot\frac{\theta}{2}
-
B\tan\frac{\theta}{2}.
$$
These standard formulas for Jacobi polynomials, as well as the
endpoint values and zero distribution used below, may be found in
\cite[Chapters~IV and VI]{Szego}. Moreover,
$$
H'(\theta)
=
-\frac{A}{2\sin^2(\theta/2)}
-\frac{B}{2\cos^2(\theta/2)}
<0,
\quad
0<\theta<\pi,
$$
and
$$
\lim_{\theta\downarrow0}H(\theta)=+\infty,
\quad
\lim_{\theta\uparrow\pi}H(\theta)=-\infty.
$$
Thus
$$
H\colon(0,\pi)\longrightarrow\mathbb R
$$
is a strictly decreasing bijection.

The monotonicity and surjectivity of $H$ will play two different
roles. First, $H'<0$ will prevent the Pr\"ufer phase from losing
monotonicity. Later, the increase of $-H'/H^2$ on the interval where
$H>0$ will provide the scalar comparison between consecutive
degrees.

\subsubsection*{Pr\"ufer variables}

We use the standard Pr\"ufer amplitude and phase; see
\cite[Section~5.5]{TeschlODE}. For each $n\geq1$, the vector
$$
\left(Y_n(\theta),-\frac{Y_n'(\theta)}{\omega_n}\right)
$$
never vanishes. Indeed, simultaneous vanishing at an interior point
would give $Y_n\equiv0$ by uniqueness for the angular equation,
contrary to $Y_n(0)=1$; see
\cite[Section~2.2, Theorem~2.2]{TeschlODE}. At the endpoints,
$$
Y_n'(0)=Y_n'(\pi)=0,
\quad
Y_n(0)=1,
\quad
Y_n(\pi)=(-1)^n\frac{(\beta+1)_n}{(\alpha+1)_n}\ne0.
$$
Set
$$
r_n(\theta)
:=
\left(
Y_n(\theta)^2+\frac{Y_n'(\theta)^2}{\omega_n^2}
\right)^{1/2}.
$$
The preceding non-vanishing shows that $r_n(\theta)>0$ on
$[0,\pi]$. Hence
$$
\theta\longmapsto
\frac{1}{r_n(\theta)}
\left(
Y_n(\theta),-\frac{Y_n'(\theta)}{\omega_n}
\right)
$$
is a continuous path in $\mathbb S^1$ starting at $(1,0)$. It
therefore admits a unique continuous lift
$\varphi_n\colon[0,\pi]\to\mathbb R$ under the covering map
$s\mapsto(\cos s,\sin s)$, subject to $\varphi_n(0)=0$.
Consequently,
$$
Y_n=r_n\cos\varphi_n,
\quad
Y_n'=-\omega_n r_n\sin\varphi_n.
$$
Standard local polar-coordinate charts show that
$r_n,\varphi_n\in C^1(0,\pi)$. Differentiating the Pr\"ufer
relations and using the angular equation yields
$$
\varphi_n'
=
\omega_n
-
H\sin\varphi_n\cos\varphi_n
$$
and
$$
\frac{r_n'}{r_n}
=
-H\sin^2\varphi_n.
$$
The initial conditions are
$$
r_n(0)=1,
\quad
\varphi_n(0)=0.
$$
Thus $Y_n'=0$ precisely when $\sin\varphi_n=0$, and at such a point
$\lvert Y_n\rvert=r_n$. Once the phase is known to be strictly
increasing, the relative extrema are therefore parametrised by the
levels $\varphi_n=k\pi$.

\subsubsection*{Strict monotonicity of the phase}

\begin{lemma}\label{lem:phase-monotonicity}
For every $n\geq1$,
$$
\varphi_n'(\theta)>0,
\quad
0<\theta<\pi.
$$
\end{lemma}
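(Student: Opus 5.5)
The plan is a first-order crossing argument for $g:=\varphi_n'$, using the equation
$$
g=\omega_n-\tfrac12 H\sin 2\varphi_n .
$$
Since $H\in C^1(0,\pi)$ and $\varphi_n\in C^1(0,\pi)$, the right-hand side shows that $g\in C^1(0,\pi)$. Differentiating,
$$
g'=-H'\sin\varphi_n\cos\varphi_n-H\cos(2\varphi_n)\,g .
$$
At any zero $\theta_0$ of $g$ the second term vanishes. The equation $g(\theta_0)=0$ gives $H\sin\varphi_n\cos\varphi_n=\omega_n>0$, so in particular $H(\theta_0)\neq0$. Hence
$$
g'(\theta_0)=-\frac{H'(\theta_0)\,\omega_n}{H(\theta_0)}.
$$
Because $H'<0$, the sign of $g'(\theta_0)$ is the sign of $H(\theta_0)$. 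Let $\theta^*$ be the unique zero of the strictly decreasing bijection $H$. Then zeros of $g$ in $(0,\theta^*)$ are upward crossings, zeros in $(\theta^*,\pi)$ are downward crossings, and $g(\theta^*)=\omega_n>0$.

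Next I would establish positivity near both endpoints. Near $\theta=0$ we have $H(\theta)\sim 2A/\theta$, and $Y_n$ is analytic and even in $\theta$ with $Y_n(0)=1$. Inserting $Y_n'(\theta)\approx c\theta$ into the angular equation gives $c(1+2A)=-\omega_n^2$, so
$$
Y_n'(\theta)\sim-\frac{\omega_n^2}{2\alpha+2}\,\theta .
$$
Since $\tan\varphi_n=-Y_n'/(\omega_nY_n)$ and $\varphi_n(0)=0$, this yields $\varphi_n(\theta)\sim\omega_n\theta/(2\alpha+2)$. It then follows from the equation, or from a direct computation of $g(\theta)=\omega_n-H\sin\varphi_n\cos\varphi_n\to\omega_n-2A\omega_n/(2\alpha+2)$, that
$$
g(0+)=\frac{\omega_n}{2\alpha+2}>0 .
$$
The same computation at $\theta=\pi$ uses $H(\theta)\sim-2B/(\pi-\theta)$, $Y_n(\pi)\neq0$ and $Y_n'(\pi)=0$, so that $\sin\varphi_n\to0$ there. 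It gives $g(\pi-)=\omega_n/(2\beta+2)>0$. Both limits are positive precisely because $A,B>0$.

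The conclusion follows on each side of $\theta^*$. On $(0,\theta^*]$, suppose $\theta_0$ were the first zero of $g$. Since $g>0$ on $(0,\theta_0)$, we would need $g'(\theta_0)\le0$, contradicting $g'(\theta_0)>0$. On $[\theta^*,\pi)$, suppose $g$ had a zero. Then $g'<0$ there, so $g$ becomes negative just after it. It can never return to $0$, since any later zero, being approached from $g<0$, would require $g'\ge0$. Thus $g<0$ up to $\pi$, contradicting $g(\pi-)>0$. Hence $g>0$ on $(0,\pi)$.

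The main obstacle is making the endpoint limits rigorous, especially at $\pi$, where the phase level $\varphi_n(\pi)$ is not yet known to be $n\pi$. I would handle this through the regular-singular-point expansion of $Y_n$ at $\theta=\pi$, using only $\sin\varphi_n\to0$. By l'H\^opital applied to $\sin\varphi_n/(\pi-\theta)$, this gives $H\sin\varphi_n\cos\varphi_n\to 2B\omega_n/(2\beta+2)$.
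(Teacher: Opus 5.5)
Your proposal is correct and is essentially the paper's proof. The paper works with $G_n=\varphi_n'/\omega_n$ and obtains the same endpoint limits $1/(2A+1)$ and $1/(2B+1)$ from the local expansions of $Y_n$. It uses the same identity at a zero, $G_n'=-(H'/\omega_n)\sin\varphi_n\cos\varphi_n$ together with $H\sin\varphi_n\cos\varphi_n=\omega_n$. It then excludes a first zero to the left of the zero of $H$ and a last zero to its right, which is equivalent to your propagation argument.

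At $\pi$ you need neither branch information nor l'H\^opital. Applied to $\sin\varphi_n/(\pi-\theta)$, l'H\^opital would in fact bring back $\varphi_n'$, the very quantity you are bounding. The paper instead inserts the expansion of $Y_n$ directly into the branch-free identity $\sin\varphi_n\cos\varphi_n=-\omega_nY_nY_n'/(\omega_n^2Y_n^2+(Y_n')^2)$.
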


\begin{proof}
Set
$$
G_n(\theta)
:=
1-\frac{H(\theta)}{\omega_n}
\sin\varphi_n(\theta)\cos\varphi_n(\theta),
$$
so that $\varphi_n'=\omega_nG_n$. We first determine the endpoint
limits of $G_n$. Since $Y_n$ is even and analytic at the origin,
write $Y_n(\theta)=1+a_n\theta^2+O(\theta^4)$. Together with
$H(\theta)=2A/\theta+O(\theta)$, the constant term in the angular
equation gives $2(2A+1)a_n+\omega_n^2=0$. Hence
$$
Y_n(\theta)
=
1-\frac{\omega_n^2}{2(2A+1)}\theta^2+O(\theta^4),
\quad
Y_n'(\theta)
=
-\frac{\omega_n^2}{2A+1}\theta+O(\theta^3).
$$
Since
$$
\sin\varphi_n\cos\varphi_n
=
-\frac{\omega_nY_nY_n'}
       {\omega_n^2Y_n^2+(Y_n')^2},
$$
and $H(\theta)=2A/\theta+O(\theta)$, it follows that
$$
\lim_{\theta\downarrow0}G_n(\theta)=\frac{1}{2A+1}>0.
$$
Similarly, with $\varepsilon=\pi-\theta$ and
$$
C_n:=Y_n(\pi)
=(-1)^n\frac{(\beta+1)_n}{(\alpha+1)_n}\ne0,
$$
the angular equation and
$H(\pi-\varepsilon)=-2B/\varepsilon+O(\varepsilon)$ may be treated
in the same way: writing
$Y_n(\pi-\varepsilon)=C_n(1+\widetilde a_n\varepsilon^2
+O(\varepsilon^4))$, the constant term gives
$2(2B+1)\widetilde a_n+\omega_n^2=0$. Thus
$$
Y_n(\pi-\varepsilon)
=
C_n\left(
1-\frac{\omega_n^2}{2(2B+1)}\varepsilon^2+O(\varepsilon^4)
\right),
$$
$$
Y_n'(\pi-\varepsilon)
=
C_n\left(
\frac{\omega_n^2}{2B+1}\varepsilon+O(\varepsilon^3)
\right).
$$
Consequently,
$$
\lim_{\theta\uparrow\pi}G_n(\theta)=\frac{1}{2B+1}>0.
$$
Differentiating its defining relation gives
$$
G_n'
=
-\frac{H'}{\omega_n}\sin\varphi_n\cos\varphi_n
-\frac{H}{\omega_n}\cos(2\varphi_n)\varphi_n'.
$$
At any zero $\theta_*$ of $G_n$, one has
$$
G_n'(\theta_*)
=
-\frac{H'(\theta_*)}{\omega_n}
\sin\varphi_n(\theta_*)\cos\varphi_n(\theta_*).
$$
The identity $G_n(\theta_*)=0$ also gives
$$
H(\theta_*)
\sin\varphi_n(\theta_*)\cos\varphi_n(\theta_*)
=
\omega_n.
$$
Let $\theta_0$ be the unique zero of $H$. Since
$G_n(\theta_0)=1$ and the two endpoint limits are positive, any zeros
of $G_n$ in $(0,\theta_0)$ or $(\theta_0,\pi)$ are confined to
compact subintervals. If a zero occurs in $(0,\theta_0)$, let
$\theta_-$ be the first one. Since $G_n>0$ immediately to its left,
$G_n'(\theta_-)\leq0$. But
$H(\theta_-)>0$, so
$\sin\varphi_n(\theta_-)\cos\varphi_n(\theta_-)>0$; since $H'<0$,
the displayed formula gives $G_n'(\theta_-)>0$, a contradiction. If
a zero occurs in $(\theta_0,\pi)$, let $\theta_+$ be the last one.
Since $G_n>0$ immediately to its right,
$G_n'(\theta_+)\geq0$, whereas $H(\theta_+)<0$ implies
$\sin\varphi_n(\theta_+)\cos\varphi_n(\theta_+)<0$ and hence
$G_n'(\theta_+)<0$. Thus
$$
G_n(\theta)>0,
\quad
0<\theta<\pi.
$$
Therefore $\varphi_n'>0$ throughout $(0,\pi)$.
\end{proof}

\subsubsection*{The phase parametrisation of the extrema}

We have shown that the phase records the oscillations of $Y_n$
without turning back. We first identify its range and the phase values
corresponding to the relative extrema.

\begin{lemma}\label{lem:phase-range}
For every $n\geq1$,
$$
\varphi_n(\pi)=n\pi.
$$
Consequently, $\varphi_n$ maps $[0,\pi]$ homeomorphically onto
$[0,n\pi]$. If
$$
\Theta_n:=\varphi_n^{-1},
$$
then
$$
x_{k,n}=\cos\Theta_n(k\pi)
$$
and
$$
\bigl\lvert p_n(x_{k,n})\bigr\rvert
=
r_n\bigl(\Theta_n(k\pi)\bigr),
\quad
0\leq k\leq n.
$$
\end{lemma}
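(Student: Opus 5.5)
The plan is to count zeros of $Y_n'$ on $[0,\pi]$ and match them with the levels $\varphi_n=k\pi$, using Lemma~\ref{lem:phase-monotonicity}.

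First, the interior zeros of $Y_n'$. Since $Y_n'(\theta)=-\sin\theta\,p_n'(\cos\theta)$, the zeros of $Y_n'$ in $(0,\pi)$ correspond bijectively to the zeros of $p_n'$ in $(-1,1)$. By the derivative identity, $p_n'$ is a constant multiple of $P_{n-1}^{(\alpha+1,\beta+1)}$. That polynomial has exactly $n-1$ simple zeros in $(-1,1)$. So $Y_n'$ has exactly $n-1$ zeros in $(0,\pi)$, and together with $\theta=0$ and $\theta=\pi$ this gives $n+1$ zeros in $[0,\pi]$.

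Next, the phase range. From the Pr\"ufer relations, $Y_n'=0$ exactly when $\sin\varphi_n=0$, i.e.\ when $\varphi_n\in\pi\mathbb Z$. By Lemma~\ref{lem:phase-monotonicity}, $\varphi_n$ is continuous on $[0,\pi]$ and strictly increasing on $(0,\pi)$, hence on $[0,\pi]$. Therefore $\varphi_n$ is a homeomorphism of $[0,\pi]$ onto $[0,\varphi_n(\pi)]$. It takes each value in $\pi\mathbb Z\cap[0,\varphi_n(\pi)]$ exactly once, and these are exactly the zeros of $Y_n'$. We have $\varphi_n(0)=0$, and $Y_n'(\pi)=0$ forces $\varphi_n(\pi)=m\pi$ for some integer $m$. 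The set $\{0,\pi,\dots,m\pi\}$ has $m+1$ elements, and it must match the $n+1$ zeros of $Y_n'$. Hence $m=n$, i.e.\ $\varphi_n(\pi)=n\pi$, and $\varphi_n$ maps $[0,\pi]$ homeomorphically onto $[0,n\pi]$.

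Finally, the extrema. The points $x_{k,n}$ are, by definition, $1$, $-1$, and the $n-1$ interior critical points of $p_n$, listed in decreasing order. Since $\cos$ is decreasing on $[0,\pi]$ and $\Theta_n=\varphi_n^{-1}$ is increasing, the $n+1$ points $\Theta_n(k\pi)$, $0\le k\le n$, are exactly the zeros of $Y_n'$ in increasing order. Hence $x_{k,n}=\cos\Theta_n(k\pi)$.

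At each such point $\sin\varphi_n=0$ and $\cos\varphi_n=\pm1$, so $\lvert p_n(x_{k,n})\rvert=\lvert Y_n\rvert=r_n$. This gives the amplitude identity.

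To confirm that these are genuine strict relative maxima of $\lvert p_n\rvert$, as the introduction promised, I will use the simplicity of the zeros. Between consecutive critical levels, $\cos\varphi_n$ changes sign, so $Y_n$ vanishes exactly once there. Thus $\lvert Y_n\rvert$ rises from $0$ to a critical point and falls back to $0$. The endpoint maxima are one-sided in the same way.

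The main obstacle is the counting step: it relies on the $n-1$ zeros of $P_{n-1}^{(\alpha+1,\beta+1)}$ being distinct and interior. This is the standard zero-distribution fact for $\alpha+1,\beta+1>-1$, cited from \cite{Szego}.
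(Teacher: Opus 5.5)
Your proof is correct and has the same structure as the paper's: strict monotonicity of $\varphi_n$ plus a zero count forces $\varphi_n(\pi)=n\pi$, and the extrema are then the preimages of the levels $k\pi$. The differences are minor. The paper counts the $n$ interior zeros of $Y_n$ against the zeros of $\cos s$ in $(0,m_n\pi)$, so it does not need the derivative identity, and it checks strict maximality via $Y_n''=-\omega_n^2Y_n$ at interior critical points together with the endpoint expansions. Your argument from the constant sign of $\sin\varphi_n$ between consecutive levels instead treats the interior and one-sided endpoint maxima uniformly, without Taylor expansions.
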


\begin{proof}
By Lemma~\ref{lem:phase-monotonicity} and continuity,
$\varphi_n$ is strictly increasing on $[0,\pi]$. Since
$Y_n'(\pi)=0$ and $r_n(\pi)>0$, one has
$\varphi_n(\pi)=m_n\pi$ for some positive integer $m_n$.
Furthermore, $Y_n$ has exactly $n$ zeros in $(0,\pi)$. Since
$$
Y_n=r_n\cos\varphi_n,
\quad
r_n>0,
$$
and $\varphi_n$ is strictly increasing, these zeros are precisely the
inverse images of the zeros of $\cos s$ in $(0,m_n\pi)$. Hence
$m_n=n$.

The asserted homeomorphism now follows from continuity and strict
monotonicity. The second Pr\"ufer relation shows that
$Y_n'(\theta)=0$ if and only if $\sin\varphi_n(\theta)=0$.

The successive critical points, including the endpoints, are
therefore $\Theta_n(k\pi)$, $0\leq k\leq n$. At an interior critical
point $\theta_*$, the angular equation gives
$Y_n''(\theta_*)=-\omega_n^2Y_n(\theta_*)$; hence
$$
\left.\frac{d^2}{d\theta^2}Y_n(\theta)^2\right|_{\theta=\theta_*}
=-2\omega_n^2Y_n(\theta_*)^2<0.
$$
Thus every interior critical point is a strict relative maximum of
$\lvert Y_n\rvert$. At the endpoints, the expansions used in the proof of
Lemma~\ref{lem:phase-monotonicity} yield
$$
Y_n(\theta)^2
=
1-\frac{\omega_n^2}{2A+1}\theta^2+O(\theta^4)
$$
as $\theta\downarrow0$, and
$$
Y_n(\pi-\varepsilon)^2
=
C_n^2\left(
1-\frac{\omega_n^2}{2B+1}\varepsilon^2
+O(\varepsilon^4)
\right)
$$
as $\varepsilon\downarrow0$. These are strict one-sided maxima.
Hence the indexing agrees with that in
Conjecture~\ref{conj:szego-askey}, and
$$
x_{k,n}=\cos\Theta_n(k\pi).
$$
At these points,
$$
p_n(x_{k,n})
=
Y_n\bigl(\Theta_n(k\pi)\bigr)
=
(-1)^k r_n\bigl(\Theta_n(k\pi)\bigr),
$$
which proves the final assertion.
\end{proof}

The next step compares the positions at which two consecutive
solutions attain the same phase.

\subsubsection*{Comparison at equal phase}

\begin{lemma}\label{lem:phase-comparison}
For every $n\geq1$,
$$
\varphi_{n+1}(\theta)>\varphi_n(\theta),
\quad
0<\theta<\pi.
$$
Consequently,
$$
\Theta_{n+1}(s)<\Theta_n(s),
\quad
0<s\leq n\pi.
$$
\end{lemma}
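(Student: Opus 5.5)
We compare the two phase equations through the difference $\psi:=\varphi_{n+1}-\varphi_n$ on $(0,\pi)$. Subtracting the first Pr\"ufer relations gives
$$
\psi'=(\omega_{n+1}-\omega_n)-\frac{H}{2}\bigl(\sin 2\varphi_{n+1}-\sin 2\varphi_n\bigr).
$$
Here $\omega_{n+1}>\omega_n$ because $n(n+A+B)$ is increasing in $n$. The decisive observation is that at any point where $\psi$ vanishes, or more generally where $\psi\in\pi\mathbb Z$, we have $\sin2\varphi_{n+1}=\sin2\varphi_n$. The bracket then vanishes and $\psi'=\omega_{n+1}-\omega_n>0$, whatever the size or sign of $H$. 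So the graph of $\psi$ can cross the level $0$ only upward. If $\psi(\theta_1)\le0$ at some interior $\theta_1$ while $\psi>0$ near $0$, let $\theta_*$ be the first zero. Then $\psi>0$ just to its left forces $\psi'(\theta_*)\le0$, a contradiction. It therefore suffices to show that $\psi>0$ on some interval $(0,\delta)$.

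Near the origin we would use the expansions from the proof of Lemma~\ref{lem:phase-monotonicity}. From $Y_n=1-\frac{\omega_n^2}{2(2A+1)}\theta^2+O(\theta^4)$ and $Y_n'=-\frac{\omega_n^2}{2A+1}\theta+O(\theta^3)$ we get
$$
\tan\varphi_n=-\frac{Y_n'}{\omega_nY_n}=\frac{\omega_n}{2A+1}\theta+O(\theta^3).
$$
Since $\varphi_n(0)=0$ and the lift is continuous, this gives $\varphi_n(\theta)=\frac{\omega_n}{2A+1}\theta+O(\theta^3)$. Hence
$$
\psi(\theta)=\frac{\omega_{n+1}-\omega_n}{2A+1}\theta+O(\theta^3)>0
$$
for small $\theta>0$. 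This completes the first assertion. The only delicate point is justifying that the $O(\theta^3)$ remainders are uniform, which follows from analyticity of $Y_n$ at $0$. Alternatively, one can avoid expansions by noting that $\psi(0)=0$ and $\lim_{\theta\downarrow0}\psi'(\theta)=(\omega_{n+1}-\omega_n)/(2A+1)>0$, using the limit of $G_n$ computed earlier.

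For the inverse phases, fix $s\in(0,n\pi]$ and put $\theta=\Theta_n(s)$. By Lemma~\ref{lem:phase-range}, $\theta\in(0,\pi]$ and $\varphi_n(\theta)=s$. If $\theta<\pi$, the first part gives $\varphi_{n+1}(\theta)>s$. If $\theta=\pi$, then $s=n\pi$ and $\varphi_{n+1}(\pi)=(n+1)\pi>s$. In both cases, since $\varphi_{n+1}$ is a strictly increasing homeomorphism onto $[0,(n+1)\pi]$ and $s$ lies in its range, $\Theta_{n+1}(s)<\theta=\Theta_n(s)$.

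I expect no serious obstacle: the crossing argument is clean because equal phases modulo $\pi$ cancel the $H$-term exactly. The main care is at the singular endpoint $\theta=0$, where the argument must start from a strict positive initial slope rather than from an interior crossing.
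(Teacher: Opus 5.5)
Your proposal is correct and follows essentially the paper's argument. Both proofs obtain positivity of $\varphi_{n+1}-\varphi_n$ near $0$ from $\varphi_j=\frac{\omega_j}{2A+1}\theta+O(\theta^3)$. Both then use a first-zero contradiction, in which the $H$-terms cancel and the derivative of the difference equals $\omega_{n+1}-\omega_n>0$, and the same two cases ($s<n\pi$ and $s=n\pi$) for the inverse phases. Your alternative start, via $\lim_{\theta\downarrow0}G_n=1/(2A+1)$ and the mean value theorem, is an equally valid minor variant.
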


\begin{proof}
The left-endpoint expansion used above gives
$$
\varphi_j(\theta)
=
\frac{\omega_j}{2A+1}\theta+O(\theta^3),
\quad \theta\downarrow0.
$$
Since $\omega_{n+1}>\omega_n$, the difference
$\varphi_{n+1}-\varphi_n$ is positive near the origin. If it vanished
in $(0,\pi)$, let $\theta_*$ be its first zero. The difference is
positive immediately to the left of $\theta_*$, so
$$
\bigl(\varphi_{n+1}-\varphi_n\bigr)'(\theta_*)\leq0.
$$
At $\theta_*$ the two phases have the same value, and their
differential equations therefore give
$$
\bigl(\varphi_{n+1}-\varphi_n\bigr)'(\theta_*)
=
\omega_{n+1}-\omega_n>0,
$$
a contradiction. Hence $\varphi_{n+1}>\varphi_n$ on $(0,\pi)$.

For $0<s<n\pi$, evaluation at $\theta=\Theta_n(s)$ and strict
monotonicity of $\varphi_{n+1}$ give
$\Theta_{n+1}(s)<\Theta_n(s)$. The same conclusion at $s=n\pi$
follows from
$\varphi_{n+1}(\pi)=(n+1)\pi>n\pi=\varphi_n(\pi)$.
\end{proof}

For $0\leq s\leq j\pi$, set
$$
R_j(s):=r_j\bigl(\Theta_j(s)\bigr).
$$
Since $\varphi_j'>0$ on $(0,\pi)$, the inverse-function theorem gives
$$
\Theta_j\in C^1(0,j\pi),
\quad
\Theta_j'(s)
=
\frac{1}{\varphi_j'(\Theta_j(s))}.
$$
Thus $R_j(s)$ is the Pr\"ufer amplitude of the degree-$j$ solution
when its phase is $s$. Write
$$
\mathbf e(s):=(\cos s,\sin s).
$$
Figure~\ref{fig:equal-phase} records the two geometric comparisons on
which the remainder of the proof rests.

\begin{figure}[htbp]
\centering
\begin{tikzpicture}[line cap=round,line join=round]
\tikzset{
  proofaxis/.style={
    draw=black!72,
    line width=0.65pt,
    -stealth
  },
  phasen/.style={
    draw=black!82,
    line width=0.95pt
  },
  phasenext/.style={
    draw=black!52,
    line width=0.95pt,
    dash pattern=on 3.2pt off 2.1pt
  },
  guide/.style={
    draw=black!42,
    line width=0.52pt,
    dash pattern=on 2.4pt off 1.8pt
  },
  prooflabel/.style={
    font=\scriptsize,
    text=black!78,
    inner sep=1.3pt
  },
  panellabel/.style={
    font=\small,
    text=black!82
  }
}

\begin{scope}[x=0.84cm,y=0.67cm]
  \draw[proofaxis] (0,0) -- (4.62,0)
    node[right,prooflabel] {$\theta$};
  \draw[proofaxis] (0,0) -- (0,4.70)
    node[above,prooflabel] {$\varphi$};

  \draw[guide] (0.10,2.42) -- (4.27,2.42);
  \node[prooflabel,anchor=east] at (0.04,2.42) {$s$};

  \draw[phasenext]
    (0,0)
    .. controls (0.74,0.49) and (1.66,1.44) .. (2.43,2.42)
    .. controls (3.02,3.17) and (3.62,3.88) .. (4.24,4.43);

  \draw[phasen]
    (0,0)
    .. controls (0.86,0.31) and (1.96,1.25) .. (3.10,2.42)
    .. controls (3.48,2.81) and (3.85,3.19) .. (4.24,3.58);

  \draw[guide] (2.43,0) -- (2.43,2.42);
  \draw[guide] (3.10,0) -- (3.10,2.42);

  \fill[black!52] (2.43,2.42) circle (1.4pt);
  \fill[black!82] (3.10,2.42) circle (1.4pt);

  \node[prooflabel,anchor=west] at (4.24,4.43)
    {$\varphi_{n+1}$};
  \node[prooflabel,anchor=west] at (4.24,3.58)
    {$\varphi_n$};

  \node[prooflabel,anchor=north] at (2.43,-0.05)
    {$\Theta_{n+1}(s)$};
  \node[prooflabel,anchor=north] at (3.10,-0.55)
    {$\Theta_n(s)$};

  \node[panellabel] at (2.22,-1.18) {(i)};
\end{scope}

\begin{scope}[xshift=7.05cm,x=0.88cm,y=0.88cm]
  \draw[proofaxis] (-0.10,0) -- (3.72,0)
    node[right,prooflabel] {$Y_j$};
  \draw[proofaxis] (0,-0.10) -- (0,3.20)
    node[above,prooflabel] {$-Y_j'/\omega_j$};

  \draw[guide] (0,0) -- (31:3.55);
  \node[prooflabel,anchor=south west] at (31:3.42)
    {$\mathbf e(s)$};

  \fill[black!52] (31:1.72) circle (1.55pt);
  \fill[black!82] (31:2.84) circle (1.65pt);

  \node[prooflabel,anchor=south east]
    at (31:1.67) {$R_{n+1}(s)\mathbf e(s)$};
  \node[prooflabel,anchor=north west]
    at (31:2.90) {$R_n(s)\mathbf e(s)$};

  \node[panellabel] at (1.70,-1.18) {(ii)};
\end{scope}
\end{tikzpicture}
\caption{Schematic equal-phase comparison. The phase ordering in (i) gives
$\Theta_{n+1}(s)<\Theta_n(s)$. In (ii), the corresponding phase-plane
points lie on the ray generated by $\mathbf e(s)$;
Proposition~\ref{prop:amplitude-comparison}
orders their radii.}
\label{fig:equal-phase}
\end{figure}

\subsubsection*{The normalised coefficient}

The division by the degree-dependent frequency requires some care.
Although Lemma~\ref{lem:phase-comparison} and the decrease of $H$
give
$$
H\bigl(\Theta_{n+1}(s)\bigr)
>
H\bigl(\Theta_n(s)\bigr),
$$
this inequality alone is insufficient when both sides are positive,
because $\omega_{n+1}>\omega_n$. The following lemma supplies the
normalised comparison needed below.

\begin{lemma}\label{lem:q-comparison}
For $j\geq1$ and $0<s<j\pi$, define
$$
q_j(s):=
\frac{H(\Theta_j(s))}{\omega_j}.
$$
Then, for every $n\geq1$,
$$
q_{n+1}(s)>q_n(s),
\quad
0<s<n\pi.
$$
\end{lemma}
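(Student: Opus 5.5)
The plan is to split according to the sign of $H(\Theta_n(s))$, to dispose of the nonpositive case directly, and to treat the positive case by a crossing argument for a first-order equation satisfied by $q_j$ in the phase variable $s$.

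\emph{Nonpositive case.} By Lemma~\ref{lem:phase-comparison} and the strict decrease of $H$, we have $h_{n+1}:=H(\Theta_{n+1}(s))>h_n:=H(\Theta_n(s))$. Suppose $h_n\le 0$. If $h_{n+1}>0$, then $q_{n+1}>0\ge q_n$ and we are done. Otherwise both are nonpositive, and since $\omega_{n+1}>\omega_n$,
$$
q_{n+1}=\frac{h_{n+1}}{\omega_{n+1}}>\frac{h_n}{\omega_{n+1}}\ge\frac{h_n}{\omega_n}=q_n .
$$
Hence only the set of $s$ with $\Theta_n(s)<\theta_0$ remains. There $\Theta_{n+1}(s)<\Theta_n(s)<\theta_0$, so both $q_j(s)$ are positive.

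\emph{An equation in the phase variable.} Since $H$ is a bijection, $F(h):=-H'(H^{-1}(h))>0$ is well defined, and on $h>0$ we set $\Phi(h):=F(h)/h^2$. The chain rule, $\Theta_j'=1/\varphi_j'$, and the phase equation $\varphi_j'=\omega_j(1-q_j\sin s\cos s)$ give
$$
q_j'(s)=\frac{H'(\Theta_j)}{\omega_j\varphi_j'(\Theta_j)}
=-\frac{q_j^2\,\Phi(\omega_jq_j)}{1-q_j\sin s\cos s}.
$$
The denominator is positive by Lemma~\ref{lem:phase-monotonicity}. The key scalar input is that $-H'/H^2$ is strictly increasing in $\theta$ on $(0,\theta_0)$. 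Because $H$ decreases, this is the same as $\Phi$ being strictly decreasing on $(0,\infty)$. I would verify it by direct differentiation using $H=A\cot(\theta/2)-B\tan(\theta/2)$ and the explicit formula for $H'$; with $t=\tan(\theta/2)$ this reduces to a polynomial sign check on $0<t<\sqrt{A/B}$.

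\emph{Crossing argument.} Let $d:=q_{n+1}-q_n$ on the interval where both are positive, and suppose $d(s_*)=0$ with common value $q>0$. The two equations then give
$$
d'(s_*)=\frac{q^2}{1-q\sin s_*\cos s_*}\bigl[\Phi(\omega_nq)-\Phi(\omega_{n+1}q)\bigr]>0 .
$$
So $d$ can only cross zero upwards. Consequently, if $d>0$ on some interval $(0,\delta)$, then $d>0$ throughout the positive region. Indeed, at a first zero $s_*$ we would need $d'(s_*)\le0$, a contradiction. This region is an initial interval $(0,s_1)$, since $q_n$ decreases in $s$; on the complementary part the nonpositive case already gives $d>0$.

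\emph{Main obstacle: the behaviour as $s\downarrow0$.} From $\Theta_j(s)\sim(2A+1)s/\omega_j$ and $H(\theta)\sim 2A/\theta$ we get $q_j(s)\sim 2A/((2A+1)s)$ for both $j$. The leading terms therefore coincide, and the sign of $d$ near $0$ is decided at a subleading order. To find it, I would pass to $u_j:=1/q_j$, which satisfies
$$
u_j(0)=0,\qquad u_j'=\frac{\Phi(\omega_j/u_j)}{1-\sin s\cos s/u_j}.
$$
Into this I would insert the expansions $H(\theta)=2A/\theta-(A/6+B/2)\theta+O(\theta^3)$, $\Phi(h)=1/(2A)+c_1h^{-2}+O(h^{-4})$, and $\varphi_j$ to third order in $\theta$, the last obtained from the series of $Y_j$ carried one step further. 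The aim is an expansion $u_j(s)=a s+b_j s^3+O(s^5)$ in which $b_j$ depends on $\omega_j^2$ monotonically. Here $c_1<0$ should be forced by the strict decrease of $\Phi$. This would give $u_{n+1}<u_n$, that is $d>0$, for small $s$.

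If this local expansion turns out to be degenerate, my fallback is to perturb the comparison. I would compare $q_{n+1}$ with the solution of the $q_n$-equation started from a slightly larger initial value, run the crossing argument for that pair, and then let the perturbation tend to zero. This yields $d\ge0$, and the strict inequality then follows from the strict positivity of $d'$ at any zero of $d$.
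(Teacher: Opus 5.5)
Your architecture is the paper's, and everything except the start of the crossing argument is sound. The shared ingredients are the sign split, the phase-variable equation for $q_j$ (your $\Phi(\omega_jq_j)$ is exactly the paper's $L(\Theta_j(s))=-H'/H^2$), the monotonicity of $-H'/H^2$ on $(0,\theta_0)$, the first-crossing argument, and a subleading expansion at $s=0$.

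The gap is at the step you yourself flag, the sign of $d$ near $s=0$, and there the proposal points the wrong way. Since $\Phi$ decreases strictly to its limit $1/(2A)$ as $h\to\infty$, one has $\Phi(h)>1/(2A)$. Monotonicity therefore forces $c_1\ge0$, not $c_1<0$, and it cannot rule out $c_1=0$. The sign must be computed. From $H=2A/\theta-\delta\theta+O(\theta^3)$ with $\delta=(A+3B)/6$ one gets $-H'/H^2=\frac{1}{2A}\bigl(1+\frac{3\delta}{2A}\theta^2\bigr)+O(\theta^4)$. Together with $\theta\sim2A/h$ this gives $c_1=3\delta=(A+3B)/2>0$. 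Matching the $s^2$ terms in your $u_j$-equation then gives
\[
\frac{2A+3}{2A+1}\,b_j=\frac{c_1a^2}{\omega_j^2}-\frac23,\qquad a=\frac{2A+1}{2A},
\]
so that
\[
u_{n+1}-u_n=\frac{2A+1}{2A+3}\,c_1a^2\bigl(\omega_{n+1}^{-2}-\omega_n^{-2}\bigr)s^3+O(s^5).
\]
This is negative precisely because $c_1>0$. With your sign it would be positive, $d$ would be negative near $0$, and the crossing argument would have nothing to start from. The corrected computation is exactly the paper's expansion $q_{n+1}-q_n=\frac{3\delta(2A+1)}{2A+3}(\omega_n^{-2}-\omega_{n+1}^{-2})s+O(s^3)$, in which $\delta>0$, i.e.\ $A+3B>0$, is the decisive input. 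You should also justify the ansatz $u_j=as+b_js^3+O(s^5)$. It holds because $u_j=\omega_j/H(\Theta_j)$ and both $1/H$ and $\Theta_j$ are odd and analytic at $0$; once this is said, you do not need $\varphi_j$ to third order at all.

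The fallback does not repair this as written. Both $q_j$ equal $+\infty$ at $s=0$, which is a singular point of the equation, so there is no meaningful "slightly larger initial value" there. The comparison solution must be started at some $s=\varepsilon>0$, and you must then prove that it converges to $q_n$ as $\varepsilon\to0$. That is a genuine analysis of the singular point, which the proposal does not contain. It is feasible, since for $u=1/q$ the linearisation near $u=as$ is $w'\approx-2Aw/s$, which is contracting. But it is the real content of that route, not a routine limiting step.
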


\begin{proof}
Let $\theta_0$ be the unique zero of $H$, and set
$$
s_{0,n}:=\varphi_n(\theta_0).
$$
Lemma~\ref{lem:phase-comparison} gives
$$
s_{0,n+1}>s_{0,n}.
$$
We first prove the comparison for $0<s<s_{0,n}$, where both
$q_n(s)$ and $q_{n+1}(s)$ are positive.

For $0<\theta<\theta_0$, define
$$
L(\theta):=-\frac{H'(\theta)}{H(\theta)^2}.
$$
Writing $u=\tan(\theta/2)$ and $v=u^2$, one obtains
$$
H(\theta)=\frac{A-Bu^2}{u}
$$
and
$$
2L(\theta)
=
\frac{(1+v)(A+Bv)}{(A-Bv)^2}.
$$
Since $0<v<A/B$ in this interval,
$$
\frac{d}{dv}
\left(\frac{(1+v)(A+Bv)}{(A-Bv)^2}\right)
=
\frac{A(A+3B)+B(3A+B)v}{(A-Bv)^3}
>0.
$$
Since $v=\tan^2(\theta/2)$ is strictly increasing with $\theta$, it
follows that $L$ is strictly increasing on $(0,\theta_0)$.

We next determine the order of $q_n$ near $s=0$. Put
$$
\delta:=\frac{A+3B}{6}.
$$
Since $A,B>0$, one has $\delta>0$.

Then
$$
H(\theta)
=
\frac{2A}{\theta}-\delta\theta+O(\theta^3).
$$
For either of the two frequencies under consideration, write
$\omega=\omega_j$ and $\varphi=\varphi_j$. Since $Y_j(0)=1$, one has
$Y_j>0$ near the origin, where the chosen branch of the phase is
$$
\varphi(\theta)
=
\arctan\left(-\frac{Y_j'(\theta)}{\omega Y_j(\theta)}\right).
$$
The numerator is odd and analytic, whereas the denominator is even,
analytic, and non-zero there. Thus $\varphi$ is odd and analytic, so
$$
\varphi(\theta)=\kappa\theta+\eta\theta^3+O(\theta^5).
$$
Using
$$
\sin\varphi\cos\varphi
=
\varphi-\frac23\varphi^3+O(\varphi^5)
$$
in the phase equation gives
$$
\kappa=\frac{\omega}{2A+1},
\quad
\eta
=
\frac{\frac{4A}{3}\kappa^3+\delta\kappa}{2A+3}.
$$
Inverting this expansion gives
$$
\Theta_j(s)
=
\frac{s}{\kappa}-\frac{\eta}{\kappa^4}s^3+O(s^5).
$$
Substitution into $H/\omega_j$ yields
$$
q_j(s)
=
\frac{2A}{(2A+1)s}
+
\left[
\frac{8A^2}{3(2A+1)(2A+3)}
-
\frac{3\delta(2A+1)}
     {(2A+3)\omega_j^2}
\right]s
+
O(s^3).
$$
Consequently,
$$
q_{n+1}(s)-q_n(s)
=
\frac{3\delta(2A+1)}{2A+3}
\left(
\frac1{\omega_n^2}-\frac1{\omega_{n+1}^2}
\right)s
+
O(s^3).
$$
Because $\delta>0$ and $\omega_{n+1}>\omega_n$, the right-hand side
is positive for all sufficiently small positive $s$.

Differentiating $q_j$ with respect to the phase variable gives
$$
q_j'(s)
=
\frac{H'(\Theta_j(s))}
{\omega_j^2
\bigl(1-q_j(s)\sin s\cos s\bigr)}
$$
and hence, while $q_j>0$,
$$
q_j'(s)
=
-
\frac{
L(\Theta_j(s))q_j(s)^2
}{
1-q_j(s)\sin s\cos s
}.
$$
Indeed,
$$
1-q_j(s)\sin s\cos s
=
\frac{\varphi_j'(\Theta_j(s))}{\omega_j}>0
$$
by Lemma~\ref{lem:phase-monotonicity}. Set
$$
f(s):=q_{n+1}(s)-q_n(s).
$$
Suppose, to the contrary, that $f$ has a zero in
$(0,s_{0,n})$. The local expansion shows that $f>0$ in a right
neighbourhood of $0$. Moreover,
$$
f(s_{0,n})
=
q_{n+1}(s_{0,n})-q_n(s_{0,n})
=
q_{n+1}(s_{0,n})
>0,
$$
because $q_n(s_{0,n})=0$ and $s_{0,n}<s_{0,n+1}$.
By continuity, there exist $\varepsilon_0,\varepsilon_1>0$,
chosen so that
$$
\varepsilon_0<s_{0,n}-\varepsilon_1,
$$
for which
$$
f(s)>0
\quad\text{on}\quad
(0,\varepsilon_0]
\cup
[s_{0,n}-\varepsilon_1,s_{0,n}].
$$
Consequently, every zero of $f$ in $(0,s_{0,n})$ belongs to the
compact interval
$$
[\varepsilon_0,s_{0,n}-\varepsilon_1].
$$
The zero set is therefore nonempty and compact. Let $s_*$ be its
least element. Then
$$
f(s)>0\quad(0<s<s_*),
\quad
f(s_*)=0,
$$
and hence
$$
f'(s_*)\leq0.
$$
Writing
$$
q_{n+1}(s_*)=q_n(s_*)=:q,
$$
we have $q>0$, since $s_*<s_{0,n}$. 

Lemma~\ref{lem:phase-comparison} and the strict increase of $L$ give
$$
L\bigl(\Theta_{n+1}(s_*)\bigr)
<
L\bigl(\Theta_n(s_*)\bigr).
$$
On the other hand,
$$
\bigl(q_{n+1}-q_n\bigr)'(s_*)
=
\frac{q^2}{
1-q\sin s_*\cos s_*
}
\left[
L\bigl(\Theta_n(s_*)\bigr)
-
L\bigl(\Theta_{n+1}(s_*)\bigr)
\right]
>0.
$$
This is a contradiction. Hence
$$
q_{n+1}(s)>q_n(s),
\quad
0<s\leq s_{0,n}.
$$
For $s_{0,n}<s<\min\{s_{0,n+1},n\pi\}$, one has
$$
q_n(s)<0\leq q_{n+1}(s).
$$
If $s_{0,n+1}<n\pi$, the same strict inequality holds at
$s=s_{0,n+1}$ by the signs. Finally, if
$s_{0,n+1}<s<n\pi$, then both values of $H$ are negative.
Since
$$
\Theta_{n+1}(s)<\Theta_n(s)
$$
and $H$ is strictly decreasing,
$$
\frac{H(\Theta_{n+1}(s))}{\omega_{n+1}}
>
\frac{H(\Theta_{n+1}(s))}{\omega_n}
>
\frac{H(\Theta_n(s))}{\omega_n}.
$$
The three cases prove the result.
\end{proof}

\subsubsection*{Amplitude comparison and conclusion}

\begin{proposition}[Equal-phase amplitude comparison]
\label{prop:amplitude-comparison}
For every $n\geq1$,
$$
R_{n+1}(s)<R_n(s),
\quad
0<s\leq n\pi.
$$
\end{proposition}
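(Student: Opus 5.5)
Work entirely in the phase variable. From $r_n'/r_n=-H\sin^2\varphi_n$ and $\Theta_j'(s)=1/\varphi_j'(\Theta_j(s))$, with $\varphi_j'=\omega_j(1-q_j\sin\varphi_j\cos\varphi_j)$, the chain rule gives, for $0<s<j\pi$,
$$
\frac{d}{ds}\log R_j(s)
=
-\frac{H(\Theta_j(s))\sin^2 s}{\varphi_j'(\Theta_j(s))}
=
-\frac{q_j(s)\sin^2 s}{1-q_j(s)\sin s\cos s}.
$$
Set $F(q,s):=q\sin^2 s/(1-q\sin s\cos s)$, defined where the denominator is positive. The denominator is positive at $q=q_j(s)$ by Lemma~\ref{lem:phase-monotonicity}. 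Since $R_j(0)=r_j(0)=1$ for every $j$, we get
$$
\log R_{n+1}(s)-\log R_n(s)
=
-\int_0^s\bigl[F(q_{n+1}(t),t)-F(q_n(t),t)\bigr]\,dt.
$$
It therefore suffices to show that the integrand is positive for $0<t<n\pi$. The endpoint $s=n\pi$ then follows by continuity of $R_j$ on $[0,j\pi]$, using the strict inequality already obtained on $(0,n\pi)$. Integrability near $t=0$ is harmless because $q_j(t)\sim c/t$ while $\sin^2t=O(t^2)$.

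\textbf{Monotonicity of $F$ in $q$.} Writing $a=\sin^2 s\ge0$ and $b=\sin s\cos s$,
$$
\partial_q F=\frac{a(1-qb)+qab}{(1-qb)^2}=\frac{a}{(1-qb)^2}\ge0,
$$
with equality only when $\sin s=0$. So on any interval of $q$ along which $1-qb>0$, the function $F(\cdot,s)$ is strictly increasing whenever $s\notin\pi\mathbb Z$. Combined with Lemma~\ref{lem:q-comparison}, $q_{n+1}(t)>q_n(t)$, this makes the integrand positive except at the finitely many points $t\in\pi\mathbb Z$. Hence the integral is strictly positive for $s>0$, and so $R_{n+1}(s)<R_n(s)$.

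\textbf{Main obstacle.} The delicate point is that $1-qb$ stays positive on the whole segment between $q_n(t)$ and $q_{n+1}(t)$. This is needed because $F$ has a pole where $qb=1$. Positivity holds at both endpoints by Lemma~\ref{lem:phase-monotonicity}, and $q\mapsto 1-qb$ is affine, so it is positive on the segment joining them. This gives the mean-value step cleanly:
$$
F(q_{n+1},t)-F(q_n,t)=\frac{\sin^2 t\,(q_{n+1}-q_n)}{(1-q_{n+1}b)(1-q_nb)},
$$
which is an exact identity, since $F=q a/(1-qb)$ gives $F(q_1)-F(q_2)=a(q_1-q_2)/((1-q_1b)(1-q_2b))$. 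So no sign issues remain. I would present this identity directly rather than the derivative, then conclude with the integral comparison and continuity at $s=n\pi$.
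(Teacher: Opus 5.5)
Your proposal is correct and follows the paper's proof essentially step for step. It uses the same phase-variable logarithmic derivative, the same exact difference identity whose two denominator factors are positive by Lemma~\ref{lem:phase-monotonicity}, and Lemma~\ref{lem:q-comparison} for the sign of the numerator. One point needs tightening: at $s=n\pi$, continuity together with strict inequality on $(0,n\pi)$ only yields $\leq$, so you should say explicitly that $\log(R_{n+1}/R_n)$ is nonincreasing, hence its limit at $n\pi$ is at most its negative value at any interior point, which is exactly how the paper concludes.
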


\begin{proof}
For $0<s<n\pi$, the chain rule and the two Pr\"ufer equations give
$$
\frac{d}{ds}\log R_n(s)
=
-
\frac{
q_n(s)\sin^2s
}{
1-q_n(s)\sin s\cos s
}.
$$
Writing $\nu(s)=\sin s\cos s$ and subtracting the two logarithmic
derivatives, we obtain
$$
\frac{d}{ds}
\log\frac{R_{n+1}(s)}{R_n(s)}
=
-
\frac{
\bigl(q_{n+1}(s)-q_n(s)\bigr)\sin^2s
}{
\bigl(1-q_{n+1}(s)\nu(s)\bigr)
\bigl(1-q_n(s)\nu(s)\bigr)
}.
$$
Both factors in the denominator are positive. Hence
Lemma~\ref{lem:q-comparison} shows that this derivative is negative
unless $s$ is an integral multiple of $\pi$. The function
$$
F(s):=\log\frac{R_{n+1}(s)}{R_n(s)}
$$
is continuous at the origin and $F(0)=0$, since
$R_n(0)=R_{n+1}(0)=1$. On each compact subinterval of $(0,n\pi)$,
the displayed derivative is continuous, non-positive, and strictly
negative away from the discrete set $\pi\mathbb Z$. Integration over
any non-degenerate compact subinterval therefore shows that $F$ is
strictly decreasing there, and continuity at the origin gives
$$
R_{n+1}(s)<R_n(s),
\quad
0<s<n\pi.
$$
Both amplitudes are continuous at the final phase. Moreover, the
logarithmic ratio is nonincreasing on $(0,n\pi)$. Thus, for any fixed
$s_0\in(0,n\pi)$, continuity and passage to the limit as
$s\uparrow n\pi$ give
$$
\log\frac{R_{n+1}(n\pi)}{R_n(n\pi)}
\leq
\log\frac{R_{n+1}(s_0)}{R_n(s_0)}
<0.
$$
\end{proof}

\begin{proof}[Proof of Theorem~\ref{thm:main}]
By Lemma~\ref{lem:phase-range}, for $1\leq k\leq n$,
$$
\frac{
\bigl|P_{n+1}^{(\alpha,\beta)}(x_{k,n+1})\bigr|
}{
P_{n+1}^{(\alpha,\beta)}(1)
}
=
R_{n+1}(k\pi)
<
R_n(k\pi)
=
\frac{
\bigl|P_n^{(\alpha,\beta)}(x_{k,n})\bigr|
}{
P_n^{(\alpha,\beta)}(1)
}.
$$
The strict inequality is Proposition~\ref{prop:amplitude-comparison},
and the theorem follows.
\end{proof}

\begin{remark}\label{rem:outside-range}
The conclusion cannot be extended to the entire Jacobi range
$\alpha,\beta>-1$. For
$\alpha,\beta>-1$ and $n=1$, the first non-trivial extremum counted
from $1$ is the left endpoint. For $n=2$, the first interior critical
point is
$$
x_*:=\frac{\beta-\alpha}{\alpha+\beta+4},
$$
and direct calculation gives
$$
\frac{\bigl\lvert P_1^{(\alpha,\beta)}(-1)\bigr\rvert}
     {P_1^{(\alpha,\beta)}(1)}
=\frac{\beta+1}{\alpha+1},
\quad
\frac{\bigl\lvert P_2^{(\alpha,\beta)}(x_*)\bigr\rvert}
     {P_2^{(\alpha,\beta)}(1)}
=\frac{\beta+2}{(\alpha+1)(\alpha+\beta+4)}.
$$
At $(\alpha,\beta)=(0,-3/4)$ these values are $1/4$ and $5/13$,
respectively, so the required inequality is reversed. At the
Chebyshev corner $\alpha=\beta=-1/2$, by contrast, every
corresponding value equals $1$.
\end{remark}

\section{A spherical consequence}
\label{sec:spherical}

The distinction between the open interior and its classical boundary
arises in an application due to de Oliveira Filho. If
$G(S^{d-1},t)$ denotes the graph on the unit sphere in $\mathbb R^d$
in which two points are adjacent when their inner product is $t$, he
asked whether, with $\vartheta$ denoting his extension of the
Lov\'asz theta number to these infinite distance graphs
\cite[Section~3.3]{OliveiraFilho},
$$
\lim_{t\uparrow1}\vartheta\bigl(G(S^{d-1},t)\bigr)
\leq
\vartheta\bigl(G(S^{d-1},u)\bigr),
\quad -1\leq u<1,
$$
and showed that the Jacobi-extremum comparison stated below is
sufficient for an affirmative answer \cite[Section~3.5d, Question~3, pp.~47--48]{OliveiraFilho}. The case
$d=4$ requires the following classical boundary result.

\begin{proposition}\label{prop:quadratic-boundary}
Let $\gamma>-1/2$, and for each $n\geq1$ let
$$
1=\xi_{0,n}>\xi_{1,n}>\cdots>\xi_{n,n}=-1
$$
be the points at which $\lvert P_n^{(\gamma,-1/2)}\rvert$ has a
relative maximum, with the endpoints understood one-sidedly. Then,
for $1\leq k\leq n$,
$$
\frac{\bigl\lvert P_{n+1}^{(\gamma,-1/2)}(\xi_{k,n+1})\bigr\rvert}
     {P_{n+1}^{(\gamma,-1/2)}(1)}
<
\frac{\bigl\lvert P_n^{(\gamma,-1/2)}(\xi_{k,n})\bigr\rvert}
     {P_n^{(\gamma,-1/2)}(1)}.
$$
\end{proposition}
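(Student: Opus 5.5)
The plan is to reduce Proposition~\ref{prop:quadratic-boundary} to Theorem~\ref{thm:main} on the diagonal $\alpha=\beta=\gamma$, using the classical quadratic transformation. For every $m\geq1$,
$$
\frac{P_{2m}^{(\gamma,\gamma)}(x)}{P_{2m}^{(\gamma,\gamma)}(1)}
=
\frac{P_m^{(\gamma,-1/2)}(2x^2-1)}{P_m^{(\gamma,-1/2)}(1)},
\quad -1\leq x\leq 1,
$$
see \cite[Chapter~IV]{Szego}. The left side is even in $x$ and equals $1$ at $x=1$. The right side is a polynomial of degree $2m$ in $x$ with the same value at $1$, and it is also orthogonal to lower-degree polynomials with respect to $(1-x^2)^\gamma$ (substitute $t=2x^2-1$ and use orthogonality for $(1-t)^\gamma(1+t)^{-1/2}$). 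So the identity can be quoted or rederived in a few lines. Write $g_m(x)$ for this common normalised function and $f_m(t):=P_m^{(\gamma,-1/2)}(t)/P_m^{(\gamma,-1/2)}(1)$.

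Next I will match the extrema. The map $x\mapsto t=2x^2-1$ is a strictly increasing bijection $[0,1]\to[-1,1]$, with derivative $4x$ vanishing only at $x=0$. Hence on $(0,1)$ the critical points of $g_m(x)=f_m(2x^2-1)$ are exactly the preimages of the critical points of $f_m$ in $(-1,1)$, and the absolute values agree there. The point $x=0$ corresponds to the endpoint $t=-1$.

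By Lemma~\ref{lem:phase-range} with $\alpha=\beta=\gamma$, the relative maxima of $\lvert g_m\rvert$ are $x_{0,2m}>\cdots>x_{2m,2m}$, and by evenness $x_{m,2m}=0$. Counting from $x=1$, the first $m+1$ of them are therefore the preimages, in order, of $\xi_{0,m}>\cdots>\xi_{m,m}$. Thus $2x_{k,2m}^2-1=\xi_{k,m}$ and
$$
\bigl\lvert f_m(\xi_{k,m})\bigr\rvert=\bigl\lvert g_m(x_{k,2m})\bigr\rvert,
\quad 0\leq k\leq m .
$$
In particular, the one-sided maximality of $\lvert f_m\rvert$ at $t=-1$ follows from the two-sided maximality of $\lvert g_m\rvert$ at $x=0$.

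Now fix $1\leq k\leq n$. Apply Theorem~\ref{thm:main} with $\alpha=\beta=\gamma>-1/2$ twice: once at degree $2n+1$ (admissible since $k\leq 2n+1$) and once at degree $2n$ (admissible since $k\leq 2n$). Writing $p_j$ for the normalised ultraspherical polynomial, this gives
$$
\bigl\lvert p_{2n+2}(x_{k,2n+2})\bigr\rvert
<
\bigl\lvert p_{2n+1}(x_{k,2n+1})\bigr\rvert
<
\bigl\lvert p_{2n}(x_{k,2n})\bigr\rvert .
$$
Since $k\leq n<n+1$, the matching above applies at both $m=n$ and $m=n+1$, and translates the outer terms into
$$
\bigl\lvert f_{n+1}(\xi_{k,n+1})\bigr\rvert<\bigl\lvert f_n(\xi_{k,n})\bigr\rvert,
$$
which is the claim. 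The odd degree $2n+1$ serves only as an intermediate step, so no quadratic transformation is needed for it.

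The main obstacle is bookkeeping rather than analysis: one has to verify that the indexing from $x=1$ in the even-degree ultraspherical case agrees exactly with the indexing of the $\xi_{k,m}$, including the endpoint $k=m$ mapping to $x=0$. If needed, I would also include a short self-contained check of the quadratic transformation together with its normalisation at $1$.
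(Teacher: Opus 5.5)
Your proposal is correct and follows essentially the paper's proof. It uses the same normalised quadratic transformation, the same matching of the $k$th extremum in $[0,1]$ with $\xi_{k,m}$ (including $x=0\leftrightarrow t=-1$), and the same chain $M_{k,2n+2}<M_{k,2n+1}<M_{k,2n}$ through the odd degree $2n+1$. The differences are minor: you invoke Theorem~\ref{thm:main} on the diagonal $\alpha=\beta=\gamma$ where the paper cites Sz\'asz's theorem, which is legitimate because that theorem covers $\alpha=\beta>-1/2$ and does not depend on this proposition; and you obtain maximality at $x=0$ from Lemma~\ref{lem:phase-range} plus evenness rather than from $U''(0)=-2n(2n+2\gamma+1)U(0)$.
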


\begin{proof}
The normalised quadratic transformation \cite[(4.1.5)]{Szego} is
$$
\frac{P_n^{(\gamma,-1/2)}(2u^2-1)}
     {P_n^{(\gamma,-1/2)}(1)}
=
\frac{P_{2n}^{(\gamma,\gamma)}(u)}
     {P_{2n}^{(\gamma,\gamma)}(1)},
\quad 0\leq u\leq1.
$$
Because $u\mapsto2u^2-1$ maps $[0,1]$ increasingly onto $[-1,1]$,
the $k$th Jacobi extremum corresponds to the $k$th ultraspherical
extremum in $[0,1]$ when both are enumerated from the endpoint $1$.
This remains true for $k=n$. Put $U=P_{2n}^{(\gamma,\gamma)}$.
By parity, $U'(0)=0$. If $U(0)=0$, then $0$ would be a multiple
zero of $U$, contradicting the simplicity of the zeros of Jacobi
polynomials. Hence $U(0)\ne0$. The differential equation for $U$ gives
$$
U''(0)=-2n(2n+2\gamma+1)U(0),
$$
so $(U^2)''(0)<0$. Thus $u=0$ is a strict relative maximum of
$\lvert U\rvert$. If $M_{k,m}$
denotes the normalised modulus at the $k$th
ultraspherical extremum of degree $m$, the two quantities to be
compared are therefore $M_{k,2n+2}$ and $M_{k,2n}$. Two applications
of Sz\'asz's theorem \cite{Szasz1950} give
$$
M_{k,2n+2}<M_{k,2n+1}<M_{k,2n},
\quad 1\leq k\leq n,
$$
as required.
\end{proof}

\begin{corollary}\label{cor:spherical-theta}
Let $d\geq4$, and let $G(S^{d-1},t)$ and $\vartheta$ be as above.
Then, for every $-1\leq u<1$,
$$
\lim_{t\uparrow1}\vartheta\bigl(G(S^{d-1},t)\bigr)
\leq
\vartheta\bigl(G(S^{d-1},u)\bigr).
$$
\end{corollary}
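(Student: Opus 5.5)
The plan is to feed the Jacobi comparisons already established into de Oliveira Filho's sufficient criterion \cite[Section~3.5d, Question~3]{OliveiraFilho}. First we recall how that reduction works. For the distance graph $G(S^{d-1},t)$, his theta number is a linear program over positive definite functions on the sphere. By Schoenberg's theorem these are nonnegative combinations of the normalised ultraspherical polynomials
$$
\frac{P_n^{(\gamma,\gamma)}}{P_n^{(\gamma,\gamma)}(1)},
\quad
\gamma=\frac{d-3}{2}.
$$
Hence $\vartheta\bigl(G(S^{d-1},t)\bigr)$ is governed by $\min_n p_n(t)$ with $p_n$ as in the paper, and the limit $t\uparrow1$ is governed by the behaviour of these polynomials near the extrema. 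De Oliveira Filho reduces the question to the following inequality for the normalised Jacobi polynomials $P_n^{(\alpha,\beta)}/P_n^{(\alpha,\beta)}(1)$:
$$
\Bigl|\tfrac{P_{n+1}(x_{k,n+1})}{P_{n+1}(1)}\Bigr|
<
\Bigl|\tfrac{P_n(x_{k,n})}{P_n(1)}\Bigr|,
\quad 1\le k\le n.
$$
The parameters in that inequality are tied to $d$. We will use them in the form
$$
\alpha=\frac{d-3}{2},\quad \beta=-\frac12
\qquad\text{or}\qquad
\alpha=\frac{d-2}{2},\ \beta=\frac{d-4}{2}\ \text{(or similar)}.
$$
We will quote his reduction verbatim and only verify that its hypothesis is met for each $d\ge4$.

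Second, we split according to whether the parameters lie in the open range or on its boundary. For $d\ge5$, both parameters in the reduction exceed $-1/2$. Theorem~\ref{thm:main} then gives the required strict inequalities for every $n\ge1$ and $1\le k\le n$; it needs no ordering assumption on the parameters. For $d=4$, one parameter equals $-1/2$ and the other is $\gamma=(d-3)/2=1/2>-1/2$. Theorem~\ref{thm:main} does not apply there, and we invoke Proposition~\ref{prop:quadratic-boundary} instead. That proposition supplies exactly the same family of strict inequalities. Thus in every dimension $d\ge4$ the hypothesis of the reduction holds, and the displayed inequality $\lim_{t\uparrow1}\vartheta\le\vartheta(G(S^{d-1},u))$ follows.

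Third, we check the indexing conventions. Our extrema are enumerated from $x=1$, endpoints included and one-sided, for $k=0,\dots,n$. The reduction must use the same enumeration and normalisation at $x=1$. If de Oliveira Filho enumerates from $-1$ or normalises differently, we will translate through the reflection $P_n^{(\alpha,\beta)}(-x)=(-1)^nP_n^{(\beta,\alpha)}(x)$. That translation is legitimate here because Theorem~\ref{thm:main} covers both orderings of $\alpha,\beta$.

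The main obstacle is precisely identifying the parameter pair in the reduction and matching conventions, rather than any analysis. In particular, we must confirm that the boundary parameter $-1/2$ occurs only when $d=4$. For $d=3$ one would get $\gamma=0$ paired with $-1/2$, which Proposition~\ref{prop:quadratic-boundary} would also cover. Since the corollary is stated only for $d\ge4$, however, the reduction presumably involves a parameter equal to $(d-4)/2$, and that parameter would fall below $-1/2$ when $d=3$. We therefore expect the relevant pair to be $\bigl((d-2)/2,(d-4)/2\bigr)$ or a quadratic-transform variant of it. At $d=4$ this becomes $(1,0)$, which is interior, so if that is the pair the boundary case must enter through a different piece of the reduction. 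We will first check $d=4$ directly against the statement of his Question~3 and assign each case to either Theorem~\ref{thm:main} or Proposition~\ref{prop:quadratic-boundary} accordingly.
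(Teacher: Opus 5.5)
Your outline (invoke de Oliveira Filho's reduction, then Theorem~\ref{thm:main} for $d\ge5$ and Proposition~\ref{prop:quadratic-boundary} for $d=4$) matches the paper's. However, the one substantive step of this corollary is left open: identifying the Jacobi parameters in the reduction and checking which result applies. Both pairs you actually propose are wrong. The reduction \cite[(3.40)--(3.43), p.~48]{OliveiraFilho} concerns $(\alpha,\beta)=\bigl(\tfrac{d-3}{2},\tfrac{d-5}{2}\bigr)$. It needs only $k=1$, $m\ge2$, for the rightmost interior extremum normalised at $x=1$, so the conventions already agree with Theorem~\ref{thm:main} and nothing needs translating. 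With this pair the case split is immediate:
\begin{itemize}
\item for $d\ge5$ one has $\alpha\ge1$ and $\beta\ge0$, so Theorem~\ref{thm:main} applies;
\item for $d=4$ the pair is $\bigl(\tfrac12,-\tfrac12\bigr)$, which is Proposition~\ref{prop:quadratic-boundary} with $\gamma=\tfrac12$;
\item for $d=3$ it is $(0,-1)$, outside both results.
\end{itemize}
Your second paragraph asserts exactly this split, but neither candidate you name produces it. For $\bigl(\tfrac{d-3}{2},-\tfrac12\bigr)$ the second parameter is $-\tfrac12$ for every $d$, so Theorem~\ref{thm:main} would never apply. For $\bigl(\tfrac{d-2}{2},\tfrac{d-4}{2}\bigr)$ the case $d=4$ gives the interior pair $(1,0)$, and the boundary proposition plays no role.

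You had a check available that would have settled this. At $d=3$ both of your candidates fall in the range of Proposition~\ref{prop:quadratic-boundary}: they become $\bigl(0,-\tfrac12\bigr)$ and $\bigl(\tfrac12,-\tfrac12\bigr)$. Note that $(d-4)/2=-\tfrac12$ at $d=3$, not below $-\tfrac12$ as you state. The reduction would then answer Question~3 affirmatively for $S^2$, contradicting de Oliveira Filho's computation that the answer there is negative.

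Your fallback via $P_n^{(\alpha,\beta)}(-x)=(-1)^nP_n^{(\beta,\alpha)}(x)$ is valid only if the enumeration endpoint and the normalising endpoint are switched together. Switching just one of them introduces the $n$-dependent factor $(\beta+1)_n/(\alpha+1)_n$ or its reciprocal, and the resulting inequality is not what Theorem~\ref{thm:main} provides. The fallback is unnecessary once the correct pair is used, but as written (``or'') it is not justified.
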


\begin{proof}
Put
$$
(\alpha,\beta)
=
\left(\frac{d-3}{2},\frac{d-5}{2}\right),
$$
and let $z_{1,m}$ denote the rightmost interior point at which
$\lvert P_m^{(\alpha,\beta)}\rvert$ has a relative maximum. De
Oliveira Filho showed that it suffices to prove
$$
\frac{\bigl\lvert P_{m+1}^{(\alpha,\beta)}(z_{1,m+1})\bigr\rvert}
     {P_{m+1}^{(\alpha,\beta)}(1)}
<
\frac{\bigl\lvert P_m^{(\alpha,\beta)}(z_{1,m})\bigr\rvert}
     {P_m^{(\alpha,\beta)}(1)},
\quad m\geq2;
$$
see \cite[(3.40)--(3.43), p.~48]{OliveiraFilho}. If $d\geq5$, both
parameters exceed $-1/2$, and this is Theorem~\ref{thm:main} with
$k=1$. For $d=4$, the parameters are $(1/2,-1/2)$, and the same
comparison follows from Proposition~\ref{prop:quadratic-boundary}.
\end{proof}

For $d=3$, the parameters are $(0,-1)$. De Oliveira Filho's
calculation gives
\[
\lim_{t\uparrow1}\vartheta\bigl(G(S^2,t)\bigr)
>
\vartheta\bigl(G(S^2,-1/3)\bigr);
\]
see \cite[Theorem~3.5 and p.~48]{OliveiraFilho}. Thus Question~3
has a negative answer for $d=3$, whereas
Corollary~\ref{cor:spherical-theta} gives an affirmative answer for
every $d\geq4$.

\section{Concluding remarks}

The proof rests on two scalar monotonicity properties. The inequality
$H'<0$ forces each Pr\"ufer phase to increase strictly. Comparison of
the phase equations then orders consecutive phases and their
inverses, while the endpoint expansion, together with the increase of
$-H'/H^2$ before the zero of $H$, yields the frequency-normalised
comparison required for the amplitudes. Neither argument uses the
sign of $\alpha-\beta$; only
$A,B>0$ are required.
This explains the extension from $\alpha>\beta>-1/2$ to the full
region $\alpha,\beta>-1/2$.

Strictness cannot persist throughout the closed region: at
$\alpha=\beta=-1/2$ the normalised polynomials are Chebyshev
polynomials and all relative maxima have modulus one. Boundary
families therefore require separate treatment, as illustrated by
Proposition~\ref{prop:quadratic-boundary}.

The spherical consequence shows that the same extremal comparison
also governs a concrete question for the Lov\'asz theta number of
spherical distance graphs.

\section*{Acknowledgements}
The author gratefully acknowledges that the idea of applying Pr\"ufer variables to the present problem arose while reading Kiselev's beautiful paper \cite{Kiselev2005} in connection with an entirely different question. The author acknowledges financial support from the Centre for
Mathematics of the University of Coimbra (CMUC), funded by the
Portuguese Foundation for Science and Technology (FCT), under the
projects
\href{https://doi.org/10.54499/UID/00324/2025}{UID/00324/2025} and
UID/PRR/00324/2025. The author also acknowledges financial support
from the FCT under grant
\href{https://doi.org/10.54499/2022.00143.CEECIND/CP1714/CT0002}
{2022.00143.CEECIND}.

\end{document}